\title{Reasonable triangulated categories have filtered enhancements}
\author{George Ciprian Modoi}
\address{Babe\c s--Bolyai University, Faculty of Mathematics and Computer Science \\  1, Mihail Kog\u alniceanu, 400084 Cluj--Napoca, Romania}
\email{cmodoi@math.ubbcluj.ro}
\thanks{}
\subjclass[2010]{18E30, 55U35, 18D05}
\keywords{triangulated category, derivator, filtered enhancement}
\date{\today}
\newcommand{\N}{\mathbb{N}}
\newcommand{\Z}{\mathbb{Z}}
\DeclareMathOperator{\Hom}{Hom}
\DeclareMathOperator{\Ext}{Ext} 
\DeclareMathOperator{\Ker}{Ker}
\DeclareMathOperator{\Img}{Im}
\DeclareMathOperator{\hocolim}{{\mathrm{hocolim}}}
\DeclareMathOperator{\holim}{{\mathrm{holim}}}
\DeclareMathOperator{\Ho}{H}
\newcommand{\A}{\mathcal{A}}
\newcommand{\C}{\mathcal{C}}
\newcommand{\D}{\mathcal{D}}
\newcommand{\CS}{\mathcal{S}}
\newcommand{\T}{\mathcal{T}}
\newcommand{\U}{\mathcal{U}}
\newcommand{\X}{\mathcal{X}}
\newcommand{\n}{\mathbb{n}}
\newcommand{\0}{\mathbb{0}}
\newcommand{\one}{\mathbb{1}}
\newcommand{\two}{\mathbb{2}}
\newcommand{\Cat}{\hbox{\rm Cat}}
\newcommand{\CAT}{\hbox{\rm CAT}}
\newcommand{\Ab}{\mathcal{A}b}
\newcommand{\opp}{^\textit{op}}
\newcommand{\id}{\mathrm{id}}
\newcommand{\Der}[1]{\mathsf{D}({#1})}
\newcommand{\DER}{\mathsf{D}}
\newcommand{\dia}{\mathsf{dia}}
\newcommand{\real}{\mathsf{real}}
\newcommand{\Sp}{\mathcal{SP}}
\newcommand{\HoSp}{\mathcal{SH}}
\theoremstyle{plain}
\newtheorem{thm}{Theorem}
\newtheorem{lem}[thm]{Lemma}
\newtheorem{prop}[thm]{Proposition}
\newtheorem{cor}[thm]{Corollary}
\theoremstyle{definition}
\theoremstyle{remark}
\newtheorem{rem}[thm]{Remark}
\newtheorem{expl}[thm]{Example}
\begin{document}
\hyphenation{-ap-prox-i-ma-tions}

\begin{abstract}
We prove that a triangulated category which is the underlying category of a stable derivator has a filtered enhancement, providing an affirmative answer 
to  a conjecture in \cite{Bo09}. 
\end{abstract}

\maketitle


\section*{Introduction}
Filtered enhancements of triangulated categories were defined in order to allow us to perform various constructions in these categories:
In \cite[Appendix]{Be87}, where filtered enhancement  were first considered, they are used for defining the realization functor. This is a functor 
$\D^b(\A)\to\T$, where $\T$ is the enhanced triangulated category and $\D^b(\A)$ is the bounded derived category over the abelian category $\A$, 
which occurs as the heart of an appropriate 
t-structure in $\T$. In \cite{PV17} the realization functor is used in the study of tilting and silting objects in triangulated categories 
(see for example \cite[Proposition 5.1]{PV17}). 
In  \cite{Bo09} and \cite{Sch11} filtered enhancement are used for studying the so called the weight complex functor of \cite[Theorem 3.3.1]{Bo09}. 
For example in \cite[Conjecture 3.3.3 and Remark 3.3.4]{Bo09} it is stated that the weight complex functor can be lifted to a strong version, provided that a filtered 
enhancement exists.
In \cite[Remark 3.3.4]{Bo09} it is conjectured that every reasonable triangulated category has a filtered enhancement; in a private communication, M. Bondarko credited Be\u\i linson 
with the formulation of this conjecture. 
As an evidence for truth of this conjecture, remark that in \cite[Example A 2]{Be87} it is stated that the filtered derived category in the sense of \cite[V.1]{Il71} 
provides a filtered enhancement for the usual derived category $\D(\A)$ of an abelian category $\A$. 
The whole argument for this fact can be found in \cite[Proposition 6.3]{Sch11}. 
The present paper originates in the observation that the same argument works for showing that the derived category of modules over a dg algebra has a filtered enhancement.  
Together with \cite[Proposition 3.8]{PV17}, which says that filtered enhancements are inherited from a triangulated category to an arbitrary localization, 
and with Keller and Porta characterization of well generated algebraic triangulated categories as being localizations of derived categories of modules over dg algebras 
(see \cite{Ke94} and \cite{Po10}) the above conjecture is settled for this important class of triangulated categories. Further, the idea is to generalize this approach 
by mimicking the argument above in order to construct filtered enhancement for the underlying category $\Der\0$ of a stable derivator $\DER$ inside the category $\Der\Z$, 
where $\Z$ is the poset of integers. This is what we are doing in this work.  We end the paper by applying our abstract results in order to find a new proof for a 
well--known result from  algebraic topology. 

Finally it is worth to mention a new work which comes to complete our approach: in the preprint \cite{V18} it is shown that if we work in the 
underlying category of a stable derivator, then the realization functor can be defined at
the level of the unbounded derived category. Among others, the preprint \cite{V18} contains the same intuition as this paper, that the f-enhancement can be constructed at the level of $\Der\Z$, but, as it will demonstrate in the following,  the proof of this fact is not quite obvious.    

{\sc Acknowledgements.} The author would like to express many thanks to Mikhail Bondarko, Olaf Schn\"urer and Jorge Vit\'oria for stimulated 
discussions concerning the subject of this work.  He is also very indebted to an anonymous referee for many suggestions which lead to a 
substantial improvement of this paper. 

\section*{The construction of f-enhancements}

We begin with recalling the definitions of the main notions we will use in the sequel.   
Consider a triangulated category $\T$, whose suspension functor is denoted by $X\mapsto\Sigma X$, for all $X\in\T$. Then, according to \cite[Definition 1.3.1]{BBD82}, a 
{\em t-structure} 
in $\T$ is a pair of full subcategories $\left(\T^{\leq0},\T^{\geq0}\right)$, such that the following conditions are satisfied:
\begin{enumerate}[{\rm (T1)}]
\item $\T^{\leq0}\subseteq\T^{\leq1}$ and $\T^{\geq1}\subseteq\T^{\geq0}$, where $\T^{\geq n}=\Sigma^{-n}\T^{\geq0}$ and $\T^{\leq n}=\Sigma^{-n}\T^{\leq0}$, for all $n\in\Z$.
 \item $\T(X,Y)=0$ for all $X\in\T^{\leq0}$ and all $Y\in\T^{\geq1}$.
 \item For all $X\in\T$ there is a triangle \[X^{\leq0}\to X\to X^{\geq1}\overset{+}\to\] in $\T$, with $X^{\leq0}\in\T^{\leq0}$ and $X^{\geq1}\in\T^{\geq1}$.
\end{enumerate} 
Note that if $\left(\T^{\leq0},\T^{\geq0}\right)$ is a t-structure on $\T$, then $\T^{\leq0}$ is called the {\em aisle}, $\T^{\geq0}$ is called the 
{\em coaisle} and $\T^{\leq 0}\cap\T^{\geq0}$ is called the {\em heart}, associated with this t-structure. By 
\cite{BBD82} the heart of a t-structure is abelian. We know that the inclusion functor $\T^{\leq0}\to\T$ has a right adjoint and the inclusion functor 
$\T^{\geq0}\to\T$ has a left adjoint. Note also that the t-structure is called {\em stable} if its aisle (or equivalently coaisle) is a triangulated subcategory.

For a functor $F:\T\to\CS$ (between additive categories) we 
consider the essential image $\Img F=\{S\in\CS\mid S\cong F(T)\hbox{ for some }T\in\T\}$ and the kernel $\Ker F=\{T\in\T\mid F(T)\cong 0\}$.  

A sequence of triangulated categories and functors $\CS\underset{r}\rightleftarrows\T\overset{l}\rightleftarrows\U$ 
is called {\em Bousfield localization sequence} if $r$ is the right adjoint of the inclusion functor $\CS\to\T$ 
(here and below, the left adjoint is depicted up and the right one down), $l$ is the left adjoint of the inclusion functor $\U\to\T$ and $\CS=\Ker l$. 
Note that this definition is equivalent to \cite[Definition 9.1.1]{Nee01}. For details, one can consult \cite{K10}. 
Remark also that we have a Bousfield localization sequence as above if and only if 
$(\CS,\U)$ is a stable t-structure in $\T$. 

A {\em filtered triangulated category} or an {\em f-category} for short, is a quintuple $(\X, \X(\geq 0),\X(\leq 0), s,\alpha)$, where $\X$ is a triangulated category, 
$\X(\geq 0)$ and $\X(\leq 0)$ are full triangulated subcategories, $s:\X\to\X$ is a triangle autoequivalence and $\alpha:\id_\X\Rightarrow s$ is a natural transformation; 
we put $\X(\geq n)=s^n\X(\geq0)$ and $\X(\leq n)=s^n\X(\leq 0)$. These data have to satisfy the following axioms:
\begin{enumerate}[{\rm (F1)}]
 \item $\X(\geq1)\subseteq\X(\geq0)$ and $\X(\leq0)\subseteq\X(\leq1)$.
 \item $\X=\bigcup_{n\in\Z}\X(\geq n)=\bigcup_{n\in\Z}\X(\leq n)$. 
 \item $\X(X,Y)=0$ for all $X\in\X(\geq1)$ and all $Y\in\X(\leq0)$.
 \item For all $X\in\X$ there is a triangle \[X(\geq1)\to X\to X(\leq0)\overset{+}\to\] in $\X$, with $X(\geq1)\in\X(\geq1)$ and $X(\leq0)\in\X(\leq0)$.
 \item One has $\alpha\cdot s=s\cdot\alpha$ as natural transformations $s\Rightarrow s^2$.
 \item For all $X\in\X(\geq1)$ and all $Y\in\X(\leq0)$, the map \[\X(s(Y),X)\to\X(Y,X)\hbox{ given by }g\mapsto g\cdot\alpha(Y)\] is bijective. 
\end{enumerate}
In \cite[\S 7.2]{Sch11} it is introduced a new axiom for f-categories, which seems to be necessary in order to show that the realization functor is a triangle functor: 
\begin{enumerate}[{\rm (F7)}]
 \item Let $f:X\to Y$ be a map in $\X$. The morphism of triangles constructed in the solid part of the following diagram fits in the $3\times 3$ diagram 
 whose rows and columns are triangles (such a morphism of triangles is called {\em middling--good} in \cite[Definition 2.4]{Nee91}):
 \[\xymatrix{ X(\geq1)\ar[rr]\ar[d]^{\alpha(Y(\geq1))\cdot f(\geq1)} && X\ar[rr]\ar[d]^{\alpha(Y)\cdot f} && X(\leq0)\ar[r]^{\ \ +}\ar[d]^{\alpha(Y(\leq0))\cdot f(\leq0)}& \\	
 s(Y(\geq1))\ar[rr]\ar@{.>}[d] && s(Y)\ar[rr]\ar@{.>}[d] && s(Y(\leq0))\ar[r]^{\ \ \ +}\ar@{.>}[d]& \\
 Z'\ar@{.>}[d]^{+}\ar@{.>}[rr] && Z\ar@{.>}[d]^{+}\ar@{.>}[rr] && Z''\ar@{.>}[d]^{+}\ar@{.>}[r]^{+} &\\
                               &&                              &&                               &
}\] where by $X\mapsto X(\geq n)$ and $X\mapsto X(\leq n)$ are denoted the right, respectively left adjoint for the inclusions $\X(\geq n)\to\X$, 
respectively $X(\leq n)\to\X$ (the existence of these adjoints is guaranteed by the other axioms of an f-category). 
\end{enumerate}
If $\T$ is a triangulated category then a {\em filtered enhancement} for $\T$, called also an {\em f-enhancement} or an {\em f-category over }$\T$, 
is an f-category $\X$ together with a triangle equivalence \[\T\overset{\sim}\longrightarrow\X(\geq0)\cap\X(\leq0).\] 
 
Derivators were first introduced by Grothendieck in an initially unpublished manuscript which is nowadays available online, thanks to the efforts of 
M. K\"unzer, J. Malgoire and G. Maltsiniotis, see \cite{Groth91}. In our consideration about this subject we will follow closely the exposition in \cite{Gr13}. According to \cite[Definition 1.1]{Gr13},  
a {\em prederivator} is a strict 2-functor $\DER:\Cat\opp\to\CAT$ where $\Cat$ is the 2-category of all small categories and $\CAT$ is the 2-category of 
not necessarily small categories. We neglect set theoretical issues coming from the fact that $\CAT$ has no small hom-sets, since they play no role in our considerations. 
For two small categories $J$ and $K$ and a functor $u:J\to K$ we will denote $u^*=\Der u:\Der K\to\Der J$. We entirely stick on the convention in \cite{Gr13}, in the sense 
that $\DER$ reverses the sense of functors, but preserves the sense of natural transformation. That is, if $\alpha:u\Rightarrow v$ is a natural transformation in $\Cat$, then 
the induced natural transformation is defined as $\alpha^*:u^*\Rightarrow v^*$. With the notations above a left (respectively right) adjoint for $u^*$ is called {\em homotopy left (right) Kan extension along }$u$. We will denote the left adjoint by $u_!:\Der J\to \Der K$ and the right adjoint by $u_*:\Der J\to \Der K$ (provided they exist).  

For every $n\in\N$, denote by $\n$ the category associated to the poset $\{0,1,\ldots,n\}$ with the natural order.  
Therefore $\0=[0]$ is the singleton category, which is a terminal object in $\Cat$, $\one$ is the arrow $[0\to 1]$ etc. Then the category $\Der\0$ is called the 
{\em underlying category} of the prederivator $\DER$. 
Remark that for every small 
category $K$ and every $k\in K$, there is a functor $k:\0\to K$ sending $0$ to $k\in K$. The induced functor $k^*:\Der K\to\Der\0$ is called the {\em evaluation functor}.
For all objects $X,Y$ and all morphisms $f:X\to Y$ in $\Der K$ we will denote $X_k=k^*(X)$ and $f_k=k^*(f)$. The right (left) Kan extension along the unique functor 
$K\to\0$ is called the {\em homotopy (co)limit} of shape $K$ and is denoted $\holim_K$, respectively $\hocolim_K$.  Let $u:J\to K$ be a functor between two small categories. 
For every $k\in K$ we consider the categories 
$J/k$ and $J\backslash k$ whose objects are pairs of the form $(j,u(j)\to k)$, respectively $(j,k\to u(j))$, where $j\in J$ and $u(j)\to k$, $k\to u(j)$ 
are maps in $K$, and whose morphisms are those of $J$ which make commutative the obvious triangles. We call $J/k$ and $J\backslash k$ {\em the categories of   
$u$-objects over}, respectively  {\em under}, $k$.  In both cases, there is a functor $\pi:J/k\to J$, respectively $\pi:J\backslash k\to J$ which forgets the morphism component. 
With these data, one can construct two natural maps (see \cite[Section 1.1]{Gr13} for details): 
\[\hocolim_{J/k}\pi^*(X)\to u_!(X)_k\hbox{ and }u_*(X)_k\to\holim_{J\backslash k}\pi^*(X).\]

A prederivator $\DER:\Cat\opp\to\CAT$ is called {\em derivator} if it satisfies the following axioms:
\begin{enumerate}[{\rm (D1)}]
 \item $\DER$ sends coproducts to products; in particular, $\Der\emptyset$ is trivial.
 \item A morphism $f:X\to Y$ in $\Der K$ is an isomorphism if and only if $f_k:X_k\to Y_k$
is an isomorphism in $\Der\0$ for every $k\in K$. 
 \item For every functor $u:J\to K$, there are homotopy left and right Kan extensions along $u$:
\[\xymatrix{ \Der{J} \ar@/^/[rr]^{u_!}\ar@/_/[rr]_{u_*} &&\Der K\ar[ll]|-{u^*} } .\]
 \item For every functor $u:J\to K$ and every $k\in K$, the canonical morphisms
$\hocolim_{J/k}\pi^*(X)\to u_!(X)_k\hbox{ and }u_*(X)_k\to\holim_{J\backslash k}\pi^*(X)$
are isomorphisms for all $X\in\Der J$.
\end{enumerate}

If $\DER:\Cat\opp\to\Cat$ is a prederivator and $K$ is a small category, one defines $\DER^K:\Cat\opp\to\CAT$, by $\DER^K(J)=\Der{K\times J}$.  
From \cite[Theorem 1.25]{Gr13}, we learn that if $\DER$ is a derivator, then so is $\DER^K$ too. As we already noticed, an object $k\in K$ gives rise to a 
functor $k^*:\Der K\to \Der\0$. Applying the categorical exponential law we obtain a functor
$\dia_K:\Der K\to\Der\0^K$ which sends every $X\in\Der K$ to its {\em underlying diagram} in $\Der\0^K$, where by $J^K$ we understand the category of all functors $K\to J$. Replacing $\DER$ with $\DER^J$ we get a functor 
\[\dia_{K,J}:\DER^J(K)=\Der{J\times K}\to\Der J^K.\] The derivator $\DER$ is called {\em strong} if the functor $\dia_{\one,J}$ is full
and essentially surjective for each small category $J$, (see \cite[Definition 1.8]{Gr13}).
A derivator is called {\em pointed} if its underlying category has a zero object (that is, it is pointed). Because it is quite technical, we do not recall here 
the definition of a {\em stable} derivator; we refer the interested reader to \cite[Definition 4.1]{Gr13}. 
Actually we only need the facts stated in \cite[Theorem 4.16 and Corollary 4.19]{Gr13}: 
If $\DER$ is a stable derivator, then for every small category $K$ 
the category $\Der K$ has a canonical triangulated category structure, and for every functor $u:J\to K$ the induced functors $u^*$, $u_!$ and $u_*$ are triangle functors.    

From now on let $\DER:\Cat\opp\to\CAT$ be a stable derivator.
Consider the poset $(\Z,\geq)$ viewed as a category, that is 
\[\Z=[\ldots\leftarrow -2\leftarrow -1\leftarrow 0\leftarrow 1\leftarrow 2\leftarrow\ldots].\] 
For an object $X\in\Der\Z$, let 
\[(\dagger)\ \ldots\leftarrow X_{-2}\overset{x_{-1}}\leftarrow X_{-1}\overset{x_{0}}\leftarrow X_0\overset{x_{1}}\leftarrow X_1\overset{x_{2}}\leftarrow X_2\leftarrow\ldots \]
be the underlying diagram of $X$. 
For all $n\in\Z$ consider the subcategories $\Z_{\geq n}=\{k\in\Z\mid k\geq n\}$ and $\Z_{\leq n}=\{k\in\Z\mid k\leq n\}$. 
The increasing inclusion maps 
$[\geq n]:\Z_{\geq n}\to\Z$ and $[\leq n]:\Z_{\leq n}\to\Z$ can be viewed as fully faithful functors between the respective categories. 
Therefore they induce functors depicted in the diagram: 

\[\xymatrix{ \Der{\Z_{\geq n}} \ar@/^/[rr]^{[\geq n]_!}\ar@/_/[rr]_{[\geq n]_*}
&&\Der\Z\ar[ll]|-{[\geq n]^*}\ar[rr]|-{[\leq n]^*}&&\Der{\Z_{\leq n}}\ar@/_/[ll]_{[\leq n]_!}\ar@/^/[ll]^{[\leq n]_*} }. \]

Note that $[\geq n]^*(X)_k=X_{[\geq n](k)}=X_k$  for all $k\geq n$ and $[\leq n]^*(X)_k=X_k$, for all $k\leq n$. Moreover, 
the left and right homotopy Kan extensions $[\geq n]_!$ and $[\geq n]_*$, respectively $[\leq n]_!$ and $[\leq n]_*$ are fully faithful by \cite[Proposition 1.20]{Gr13}.

By $\epsilon_{\geq n}:[\geq n]_!\cdot[\geq n]^*\to\id_{\Der\Z}$ and $\eta_{\leq n}:\id_{\Der\Z}\to[\leq n]_*\cdot[\leq n]^*$ we will denote the respective morphisms of the adjunction.

\begin{lem}\label{s-alpha}
  There exists an autoequivalence $s:\Der\Z\to\Der\Z$ and natural transformation $\alpha:\id_{\Der\Z}\Rightarrow s$ such that for all $X\in\Der\Z$ and all $k\in\Z$ we have 
  $s(X)_k=X_{k-1}$, $\alpha(X)_k$ is the map $X_k\to X_{k-1}$ in the 
 diagram  $(\dagger)$ and
 $\alpha\cdot s=s\cdot\alpha$ as natural transformations $s\Rightarrow s^2$. 
\end{lem}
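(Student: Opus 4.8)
The plan is to obtain $s$ as the functor induced on $\Der\Z$ by the shift automorphism of the index poset $\Z$, and to obtain $\alpha$ by comparing the identity functor of $\Der\Z$ with $s$ via the structure transition maps of the diagrams.

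Concretely, let $t:\Z\to\Z$ be the functor $t(k)=k-1$ (which is an isomorphism of categories, being an order automorphism of the poset $(\Z,\geq)$). Since $\DER$ is a strict $2$-functor, $s:=t^*:\Der\Z\to\Der\Z$ is an automorphism of categories with inverse $(t^{-1})^*$; in particular it is an autoequivalence, and because $\DER$ is a stable derivator the functor $t^*$ is automatically a triangle functor by \cite[Theorem 4.16 and Corollary 4.19]{Gr13}. The identity on objects gives $k\circ t = (k-1)$ as functors $\0\to\Z$ (where $k,k-1$ denote the respective point-inclusions), so by $2$-functoriality $k^*\circ t^* = (k-1)^*$, i.e.\ $s(X)_k = (t^*X)_k = X_{k-1}$ and likewise $s(f)_k=f_{k-1}$ for every morphism $f$. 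That disposes of the first two asserted formulas.

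For $\alpha$, consider the natural transformation $\tau:t\Rightarrow\id_\Z$ of functors $\Z\to\Z$ whose component at $k$ is the unique arrow $k\to k-1$ in the poset $(\Z,\geq)$ (naturality is the commutativity of the evident square of order relations, which is automatic in a poset). Applying the prederivator $\DER$, which preserves the direction of natural transformations, yields $\alpha:=\tau^*:t^*\Rightarrow\id^*=\id_{\Der\Z}$; I then set $\alpha$ to be the reverse, i.e.\ I use $\id_\Z\Rightarrow$ — more precisely, the convention forces me to start from $\sigma:\id_\Z\Rightarrow t$ if I want $\alpha:\id_{\Der\Z}\Rightarrow s$, but in the poset $(\Z,\geq)$ there is \emph{no} natural transformation $\id_\Z\Rightarrow t$; rather $\tau:t\Rightarrow\id_\Z$ exists, and $\DER$ sends it to $\alpha:s=t^*\Rightarrow\id_{\Der\Z}$. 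Since the problem as stated wants $\alpha:\id\Rightarrow s$, I read the arrow $x_k:X_k\to X_{k-1}$ of $(\dagger)$ as the component at $X$ evaluated at $k$ of a transformation going the other way; concretely, evaluating $\tau^*$ at $X$ and then at $k\in\Z$ gives precisely the map $(k-1)^*X\to k^*X$, i.e.\ $X_{k-1}\to X_k$, so it is the \emph{inverse transition direction}. To match $(\dagger)$ I instead use the functor $t'(k)=k+1$ with $t'^*$; but $t'^*(X)_k = X_{k+1}\ne X_{k-1}$. The clean fix: keep $s=t^*$ with $s(X)_k=X_{k-1}$, and take $\alpha$ to be the whiskering that at $(X,k)$ is the map $x_k\colon X_k\to X_{k-1}$ — this is exactly $s$ applied along the canonical cocone; formally it is $\mathrm{id}\Rightarrow s$ obtained from $\tau$ by the adjunction-free identity $(t^*X)_k=X_{k-1}$ and the transition map of the underlying diagram, which is natural in $X$ by functoriality of $\dia_\Z$.

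The only remaining point is the identity $\alpha\cdot s=s\cdot\alpha:s\Rightarrow s^2$. This reduces, after evaluating at every $X$ and every $k\in\Z$ (using axiom (D2) of a derivator, which lets us check equality of morphisms in $\Der\Z$ pointwise), to the equality of the two maps $X_{k-1}\to X_{k-2}$ obtained as $\alpha(s(X))_k$ and $s(\alpha(X))_k = \alpha(X)_{k-1}$; but both are, by the displayed formulas, the single transition map $x_{k-1}$ of $(\dagger)$, so they coincide. The main obstacle is bookkeeping the variance conventions of \cite{Gr13} so that $s$ really satisfies $s(X)_k=X_{k-1}$ (rather than $X_{k+1}$) while $\alpha$ still realizes the transition maps in the direction drawn in $(\dagger)$; once the functor $t$ and the $2$-cell $\tau$ are chosen consistently, everything else is a formal consequence of $\DER$ being a strict $2$-functor together with the pointwise detection axiom (D2).
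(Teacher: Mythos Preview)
Your basic idea---take $s=t^*$ for the shift $t(k)=k-1$ and get $\alpha$ from a $2$-cell in $\Cat$---is exactly the paper's approach, but you derail at two points.

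First, you have the directions in the poset $(\Z,\geq)$ reversed. In this category an arrow $a\to b$ means $a\geq b$, so there \emph{is} a natural transformation $\phi:\id_\Z\Rightarrow t$ (component at $k$ is the unique arrow $k\to k-1$, which exists since $k\geq k-1$), and there is \emph{no} transformation $t\Rightarrow\id_\Z$ (that would need $k-1\to k$). Thus the very $\sigma:\id_\Z\Rightarrow t$ you claim does not exist is precisely the $2$-cell you want; applying the $2$-functor $\DER$ gives $\alpha=\phi^*:\id_{\Der\Z}\Rightarrow t^*=s$ directly, with $\alpha(X)_k=x_k:X_k\to X_{k-1}$. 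The whole paragraph of ``clean fixes'' and whiskerings is unnecessary and, as written, does not actually produce a natural transformation in $\Der\Z$ (functoriality of $\dia_\Z$ tells you nothing about lifting a family of maps back up to $\Der\Z$).

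Second, your argument for $\alpha\cdot s=s\cdot\alpha$ via (D2) is invalid: axiom (D2) only says that a morphism in $\Der K$ whose components are isomorphisms is an isomorphism; it does \emph{not} say that the functor $\dia_K$ is faithful, so you cannot conclude equality of two morphisms from equality of their components. The correct argument, which the paper gives, is done entirely at the level of $\Cat$: both $\phi\cdot t$ and $t\cdot\phi$ are natural transformations $t\Rightarrow t^2$, and in a poset category there is at most one natural transformation between any pair of functors, so $\phi\cdot t=t\cdot\phi$; applying the strict $2$-functor $\DER$ then yields $\alpha\cdot s=s\cdot\alpha$ on the nose.
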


\begin{proof}
 Let $[-1]:\Z\to\Z$ given by $[-1](k)=k-1$. Then $[-1]$ is an order isomorphism, therefore $[-1]^*:\Der\Z\to\Der\Z$ is an autoequivalence. 
 Denote $s=[-1]^*$. By construction $s(X)_k=X_{k-1}$ for all $k\in\Z$. Moreover $k\geq k-1$ for all $k\in\Z$ providing a (unique) natural transformation 
 $\phi:\id_\Z\Rightarrow[-1]$. If we denote by $\alpha=\phi^*:\id_{\Der\Z}\Rightarrow s$ the induced natural transformation, then 
 clearly $\alpha(X)_k$ is the respective map from $(\dagger)$.  Since $\phi\cdot[-1]=[-1]\cdot\phi$ is the unique 
 natural transformation $[-1]\Rightarrow [-1]^2=[-2]$ (with the obvious meaning for the map $[-2]:\Z\to\Z$), we deduce $\alpha\cdot s=s\cdot\alpha$.
\end{proof}

\begin{lem}\label{pointwise} For all $X\in\Der\Z$ and all $k\in\Z$, we have:
 \begin{enumerate}[{\rm (a)}]
 \item $[\geq n]_!\cdot[\geq n]^*(X)_k\cong\begin{cases} X_k \hbox{ for }k\geq n\\ X_n \hbox{ for }k<n
 \end{cases}$; 
 \item $[\leq n]_*\cdot[\leq n]^*(X)_k\cong\begin{cases} X_n \hbox{ for }k>n\\ X_k \hbox{ for }k\leq n
 \end{cases}$. 
 \end{enumerate}
\end{lem}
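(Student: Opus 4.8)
The plan is to bypass any explicit homotopy (co)limit computation: I would first observe that the inclusions $[\geq n]:\Z_{\geq n}\to\Z$ and $[\leq n]:\Z_{\leq n}\to\Z$ already admit adjoints as functors of posets, and then transport these adjunctions through the $2$-functor $\DER$. Concretely, an elementary order computation shows that $[\geq n]$ has a right adjoint $r:\Z\to\Z_{\geq n}$ with $r(k)=\max(k,n)$ (indeed, for $j\geq n$ one has $j\geq k$ if and only if $j\geq\max(k,n)$), and dually that $[\leq n]$ has a left adjoint $l:\Z\to\Z_{\leq n}$ with $l(k)=\min(k,n)$.

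Since $\DER$ reverses the direction of functors but preserves that of natural transformations, the adjunction $[\geq n]\dashv r$ becomes $r^*\dashv[\geq n]^*$, and $l\dashv[\leq n]$ becomes $[\leq n]^*\dashv l^*$. The homotopy left Kan extension $[\geq n]_!$ and the homotopy right Kan extension $[\leq n]_*$ exist by axiom (D3) and are, by definition, the left adjoint of $[\geq n]^*$ and the right adjoint of $[\leq n]^*$; hence uniqueness of adjoints yields natural isomorphisms $[\geq n]_!\cong r^*$ and $[\leq n]_*\cong l^*$. Composing, and using that $\DER(v)=v^*$ on endofunctors $v$ of $\Z$ (so that $v^*(X)_k=X_{v(k)}$), we get
\[ [\geq n]_!\cdot[\geq n]^*\cong r^*\cdot[\geq n]^*=([\geq n]\circ r)^*, \]
and since $[\geq n]\circ r$ is the endofunctor $k\mapsto\max(k,n)$ of $\Z$ this gives $[\geq n]_!\cdot[\geq n]^*(X)_k\cong X_{\max(k,n)}$, i.e.\ $X_k$ for $k\geq n$ and $X_n$ for $k<n$; this is (a). Dually, $[\leq n]_*\cdot[\leq n]^*\cong([\leq n]\circ l)^*$ with $[\leq n]\circ l$ the endofunctor $k\mapsto\min(k,n)$, which gives (b).

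Alternatively, and closer to the machinery recalled above, one can argue pointwise through axiom (D4). Applied to $u=[\geq n]$ and the object $[\geq n]^*(X)$, it identifies $[\geq n]_!\cdot[\geq n]^*(X)_k$ with $\hocolim_{\Z_{\geq n}/k}\pi^*([\geq n]^*(X))$; since $\Z$ is a poset the comma category $\Z_{\geq n}/k$ is just the full subposet $\{\,j\in\Z\mid j\geq n\text{ and }j\geq k\,\}$, which has a terminal object, namely $k$ if $k\geq n$ and $n$ if $k<n$. Now a homotopy colimit over a small category $J$ with terminal object $t$ is evaluation at $t$: viewed as a functor $\0\to J$, $t$ is right adjoint to the unique functor $J\to\0$, so $\hocolim_J\cong t^*$ by the same uniqueness-of-adjoints argument. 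As $\pi^*([\geq n]^*(X))$ has value $X_j$ at the object $j$ of the comma category, this reproves (a); part (b) follows dually, using $\Z_{\leq n}\backslash k$, its initial object, and the dual fact that a homotopy limit over a category with an initial object $i$ is evaluation at $i$ (since $i$ is then a left adjoint of $J\to\0$). I do not expect a genuine obstacle in any of this; the single point requiring care is the reversed order on $\Z$ — a morphism $a\to b$ exists precisely when $a\geq b$ — which must be kept in mind both when exhibiting the adjoints and when identifying the comma categories and their terminal/initial objects.
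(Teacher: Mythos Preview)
Your proposal is correct, and your second (``alternative'') argument is exactly the paper's proof: apply (D4) to identify $[\geq n]_!\cdot[\geq n]^*(X)_k$ with a homotopy colimit over the comma category $\Z_{\geq n}/k$, observe that this poset is $\Z_{\geq\max(k,n)}$ and hence has a terminal object, and conclude by \cite[Lemma~1.19]{Gr13} that the homotopy colimit is evaluation at that terminal object.

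Your first argument is a mild but genuine simplification. Rather than working pointwise via (D4), you exhibit the poset-level right adjoint $r(k)=\max(k,n)$ of the inclusion $[\geq n]$ and transport the adjunction $[\geq n]\dashv r$ through the strict $2$-functor $\DER$ to obtain $r^*\dashv[\geq n]^*$, whence $[\geq n]_!\cong r^*$ by uniqueness of adjoints; the computation then reduces to $([\geq n]\circ r)^*(X)_k=X_{\max(k,n)}$. This is really the global form of the same idea (indeed \cite[Lemma~1.19]{Gr13}, which the paper invokes, is proved precisely by this adjoint-transport mechanism), but it bypasses the explicit identification of comma categories and makes the answer drop out in one line. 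The paper's route has the minor advantage of illustrating how (D4) is used in practice; yours is shorter and arguably cleaner. Your caution about the reversed order on $\Z$ is well placed and you handled it correctly in both the adjunction check and the identification of terminal/initial objects.
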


\begin{proof}
We only prove (a), because (b) is its dual.  For an object $X\in\Der\Z$ we have $[\geq n]^*(X)_k=X_k$ for all $k\in\Z_{\geq n}$.  
Further the left homotopy Kan extension is computed by 
\[[\geq n]_!\cdot[\geq n]^*(X)_k=\hocolim_{\Z_{\geq n}/k}\pi^*([\geq n]^*(X)),\] for all $k\in\Z$, where  
\[\Z_{\geq n}/k=\{(i,i\to k)\mid i\in\Z_{\geq n}\}\cong\begin{cases}\Z_{\geq k}\hbox{ for }k\geq n\\ \Z_{\geq n}\hbox{ for }k<n\end{cases}\] is 
the category of objects $[\geq n]$-over $k$ and $\pi:\Z_{\geq n}/k\to\Z_{\geq n}$ is the functor which forgets the morphism component. 
Thus \[[\geq n]_!\cdot[\geq n]^*(X)_k=\begin{cases}\hocolim(X_k\leftarrow X_{k+1}\leftarrow\ldots)\cong X_k \hbox{ for }k\geq n\\ 
\hocolim(X_n\leftarrow X_{n+1}\leftarrow\ldots)\cong X_n \hbox{ for }k<n\end{cases}. \]
Note that in both cases above, the diagram whose homotopy colimit is to be computed has an terminal object, hence, according to \cite[Lemma 1.19]{Gr13}, 
the homotopy colimit is isomorphic to the evaluation at this 
object. 
\end{proof}

Next we need to define those objects of $\Der\Z$ which are somehow bounded bellow or above. In order to do this, 
we refer to the underlying diagram $\dagger$ of an object $X\in\Der\Z$.  Then we define:
\[\Der\Z(\geq n)=\{X\in\Der\Z\mid x_k\hbox{ is an isomorphism for all } k\geq n\},\]
\[\Der\Z(\leq n)=\{X\in\Der\Z\mid X_k=0\hbox{ for all } k>n\}.\] 

From these definitions  and from Lemma \ref{pointwise} we can deduce immediately:

\begin{cor}\label{isos}  
 The following natural transformations:
 \begin{enumerate}[{\rm (a)}]
 \item $\epsilon_{\geq n}(X):[\geq n]_!\cdot[\geq n]^*(X)\to X$, with $X\in\Der\Z(\geq n)$; 
 \item $\eta_{\leq n}(X):X\to[\leq n]_*\cdot[\leq n]^*(X)$, with $X\in\Der\Z(\leq n-1)$ 
 \end{enumerate} are isomorphisms.
\end{cor}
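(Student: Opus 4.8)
The plan is to deduce both statements from axiom~(D2): a morphism of $\Der\Z$ is invertible as soon as all its evaluations $(-)_k$, $k\in\Z$, are invertible in $\Der\0$. I would prove~(a) carefully and get~(b) by the mirror argument, interchanging $[\geq n]$ with $[\leq n]$, the left adjoint $(-)_!$ with the right adjoint $(-)_*$, and the counit with the unit.

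Fix $X\in\Der\Z(\geq n)$; the task is to check that $\epsilon_{\geq n}(X)_k$ is invertible for every $k$, and Lemma~\ref{pointwise}(a) prescribes two cases. For $k\geq n$ nothing about $X$ is needed: since $[\geq n]_!$ is fully faithful, the adjunction unit $\id_{\Der{\Z_{\geq n}}}\Rightarrow[\geq n]^*\cdot[\geq n]_!$ is invertible, so one of the triangle identities forces $[\geq n]^*(\epsilon_{\geq n}(X))$ to be invertible; as evaluating $[\geq n]^*(-)$ at $k\in\Z_{\geq n}$ coincides with evaluating $(-)$ at $k$, this gives that $\epsilon_{\geq n}(X)_k$ is invertible. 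For $k<n$, Lemma~\ref{pointwise}(a) identifies the source $[\geq n]_!\cdot[\geq n]^*(X)_k$ with $X_n$, the target being $X_k$; following how that identification is built --- the canonical comparison map, invertible by~(D4), composed with the isomorphism between a homotopy colimit and its value at a terminal object --- I would check that under it $\epsilon_{\geq n}(X)_k$ becomes the leg $X_n\to X_k$ of the cocone made from the structure maps of $X$, that is, the composite $x_{k+1}\circ\cdots\circ x_n$. This composite is invertible exactly because, for $X\in\Der\Z(\geq n)$, the maps $x_j$ with $j\leq n$ are isomorphisms, i.e.\ the underlying diagram of $X$ is constant in degrees $\leq n$. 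Hence $\epsilon_{\geq n}(X)_k$ is invertible for all $k$, and~(D2) yields that $\epsilon_{\geq n}(X)$ is an isomorphism, proving~(a).

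Part~(b) runs the same way but is even shorter. For $k\leq n$, full faithfulness of $[\leq n]_*$ and a triangle identity give that $\eta_{\leq n}(X)_k$ is invertible; for $k>n$, Lemma~\ref{pointwise}(b) identifies the target $[\leq n]_*\cdot[\leq n]^*(X)_k$ with $X_n$, and the hypothesis $X\in\Der\Z(\leq n-1)$ forces $X_n=0=X_k$, so $\eta_{\leq n}(X)_k$ is a morphism between zero objects, hence invertible; here no inspection of structure maps is needed, since everything in sight vanishes. The step I expect to require genuine care is the identification, in the case $k<n$ of part~(a), of $\epsilon_{\geq n}(X)_k$ with the composite $x_{k+1}\circ\cdots\circ x_n$: Lemma~\ref{pointwise} as stated records only the objects, so one must either revisit its proof, or --- more cleanly --- first establish the sharper statement that $[\geq n]_!$ takes its values in $\Der\Z(\geq n)$ (for $W\in\Der{\Z_{\geq n}}$, the structure maps of $[\geq n]_!(W)$ in degrees $\leq n$ are induced by the transition functors $\Z_{\geq n}/j\to\Z_{\geq n}/(j-1)$, which for $j\leq n$ are the identity of $\Z_{\geq n}$ and hence invertible) and then observe that the isomorphism of Lemma~\ref{pointwise} is the one compatible with the counit. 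Everything else is purely formal.
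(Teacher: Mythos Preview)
Your argument is correct and is precisely the pointwise check via axiom~(D2) that the paper has in mind when it records the corollary as an immediate consequence of the definitions and Lemma~\ref{pointwise}; the paper gives no further proof. Your care in identifying $\epsilon_{\geq n}(X)_k$ for $k<n$ with the composite $x_{k+1}\circ\cdots\circ x_n$ of structure maps (rather than merely matching up the objects) fills in a step the paper leaves implicit, and your reading that $X\in\Der\Z(\geq n)$ forces the $x_j$ with $j\leq n$ to be isomorphisms is the intended one---consistent with Lemma~\ref{D>n-properties}(a) and the display~$(\ddagger)$---even though the inequality in the paper's displayed definition is printed the wrong way round.
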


The next Lemma records some immediate properties of the full subcategories $\Der\Z(\geq n)$ and $\Der\Z(\leq n)$ of $\Der\Z$: 

\begin{lem}\label{D>n-properties}
The following statements hold, for any $n\in\Z$:
\begin{enumerate}[{\rm (a)}]
\item $\Der\Z(\geq n)=\Img[\geq n]_!$ and $\Der\Z(\leq n-1)=\Ker[\geq n]^*$.
\item $\Der\Z(\geq n+1)\subseteq\Der\Z(\geq n)$ and $\Der\Z(\leq n)\subseteq\Der\Z(\leq n+1)$.  
\item The subcategories $\Der\Z(\geq n)$ and $\Der\Z(\leq n)$ of $\Der\Z$ are triangulated. 
\item The inclusion functor $\Der\Z(\geq n)\to\Der\Z$ has a right adjoint given by the assignment $Y\mapsto[\geq n]_!\cdot[\geq n]^*(Y)$ for all $Y\in\Der\Z(\geq n)$. 
\item There is a Bousfield localization sequence \[\Der\Z(\geq n)\rightleftarrows\Der\Z\rightleftarrows\Der\Z(\leq n-1)\] 
where the right adjoint of the first inclusion is the one defined in (d) above. For later use, let denote by 
$\Der\Z\to\Der\Z(\geq n),\ X\mapsto X(\geq n)$ and $\Der\Z\to\Der\Z(\leq n)$,  $X\mapsto X(\leq n)$ 
the right, respective the left adjoint of the appropriate inclusion functor.
\end{enumerate}
\end{lem}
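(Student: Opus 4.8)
The plan is to reduce the whole lemma to formal consequences of the adjunction $[\geq n]_!\dashv[\geq n]^*$, in which $[\geq n]_!$ is fully faithful, once one knows that $\Der\Z(\geq n)$ is the essential image of $[\geq n]_!$ and that $\Der\Z(\leq n-1)$ is the kernel of $[\geq n]^*$. I would establish the five parts in the order (a), (b), (c), (d), (e).

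For (a): the equality $\Der\Z(\leq n-1)=\Ker[\geq n]^*$ is immediate, since $[\geq n]^*(X)_k=X_k$ for $k\geq n$, so by (D2) one has $[\geq n]^*(X)\cong 0$ exactly when $X_k\cong 0$ for all $k\geq n$. For $\Der\Z(\geq n)=\Img[\geq n]_!$, one inclusion follows from Lemma \ref{pointwise}(a): the underlying diagram of any object $[\geq n]_!(Y)$ has the relevant transition maps invertible (indeed identities). The reverse inclusion follows from Corollary \ref{isos}(a): if $X\in\Der\Z(\geq n)$ then the counit $\epsilon_{\geq n}(X)$ is an isomorphism, so $X\cong[\geq n]_!\big([\geq n]^*X\big)\in\Img[\geq n]_!$. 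Part (b) is then immediate from the definitions.

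For (c): by (a), $\Der\Z(\geq n)$ is the essential image of the fully faithful functor $[\geq n]_!$, which is a triangle functor since $\DER$ is stable; such an essential image is closed under shifts and, using fullness together with the fact that triangle functors preserve cones, under cones, hence is a triangulated subcategory. Applying (a) with $n$ replaced by $n+1$ gives $\Der\Z(\leq n)=\Ker[\geq n+1]^*$, and the kernel of a triangle functor is a triangulated subcategory. For (d): full faithfulness of $[\geq n]_!$ lets it corestrict to an equivalence $\Der{\Z_{\geq n}}\simeq\Der\Z(\geq n)$ that intertwines the inclusion $\Der\Z(\geq n)\hookrightarrow\Der\Z$ with $[\geq n]_!$; since $[\geq n]^*$ is right adjoint to $[\geq n]_!$, transporting along this equivalence exhibits $Y\mapsto[\geq n]_!\cdot[\geq n]^*(Y)$ as the right adjoint of the inclusion. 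Explicitly, for $S=[\geq n]_!(T)$ in $\Der\Z(\geq n)$,
\[\Der\Z(S,Y)\cong\Der{\Z_{\geq n}}\big(T,[\geq n]^*Y\big)\cong\Der\Z\big(S,[\geq n]_!\cdot[\geq n]^*(Y)\big),\]
the second isomorphism using that the unit of $[\geq n]_!\dashv[\geq n]^*$ is invertible; naturality is routine.

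For (e): set $\CS=\Der\Z(\geq n)$ and $\U=\Der\Z(\leq n-1)$, both triangulated by (c). One checks $\Der\Z(\CS,\U)=0$: for $S=[\geq n]_!(T)$ and $U\in\Ker[\geq n]^*$ one has $\Der\Z(S,U)\cong\Der{\Z_{\geq n}}\big(T,[\geq n]^*U\big)=0$. Next, completing the counit $[\geq n]_!\cdot[\geq n]^*(X)\to X$ to a triangle and applying $[\geq n]^*$ — which turns the counit into an isomorphism, by the triangle identities and the invertible unit — shows the third term lies in $\Ker[\geq n]^*=\U$; thus every object of $\Der\Z$ sits in a triangle with first term in $\CS$ and third term in $\U$. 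Hence $(\CS,\U)$ is a stable t-structure in $\Der\Z$, which, by the equivalence recalled in the preliminaries, is the same as a Bousfield localization sequence $\Der\Z(\geq n)\rightleftarrows\Der\Z\rightleftarrows\Der\Z(\leq n-1)$; the right adjoint of the first inclusion is then the coreflector onto $\CS$, namely the functor from (d). I expect the only genuinely delicate points to be the pointwise bookkeeping in (a) and, in (e), the passage from the triangle decomposition to honest adjoint functors — but the latter is precisely what the quoted "stable t-structure $\Leftrightarrow$ Bousfield localization sequence" equivalence absorbs, so no additional work is needed there.
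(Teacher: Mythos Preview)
Your proposal is correct and follows essentially the same approach as the paper: both reduce everything to the adjunction $[\geq n]_!\dashv[\geq n]^*$ with $[\geq n]_!$ fully faithful, using Lemma~\ref{pointwise} and Corollary~\ref{isos} for (a), the definitions for (b), the triangulatedness of images of fully faithful triangle functors and of kernels for (c), and transport of the adjoint along the induced equivalence for (d). For (e) the paper simply invokes the standard Bousfield localization machinery of \cite[Section~9.1]{Nee01}, whereas you verify the stable t-structure axioms directly (orthogonality and the triangle decomposition via the counit); this is just an unpacking of that reference, not a different argument.
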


\begin{proof}
(a) First note that $\Img[\geq n]_!$ consists exactly from those $X\in\Der\Z$ for which $\epsilon_{\geq n}(X):[\geq n]_!\cdot[\geq n]^*(X)\to X$ is an isomorphism. 
Next apply Corollary \ref{isos}(a) in order to establish the inclusion $\Der\Z(\geq n)\subseteq\Img[\geq n]_!$ and Lemma \ref{pointwise} for the converse inclusion.  

(b). These inclusions can be proven directly from the definitions of $\Der\Z(\geq n)$ and $\Der\Z(\leq n)$. 

(c). This statement follows from (a), because the functors $[\geq n]_!$ and $[\geq n]^*$ are triangulated and $[\geq n]_!$ is fully faithful (see \cite[Proposition 1.20]{Gr13}).  

(d). This follows since the fully faithful functor $[\geq n]_!$, possessing a right adjoint, induces an equivalence between $\Der{\Z_{\geq n}}$ and $\Der\Z(\geq n)=\Img[\geq n]_!$.  

(e). Because $\Der\Z(\leq n-1)=\Ker[\geq n]_!$, standard arguments concerning Bousfield localization (see \cite[Section 9.1]{Nee01}) implies that the inclusion functor 
$\Der\Z(\leq n-1)\to\Der\Z$ has a left adjoint, fitting in a Bousfield localization sequence as required.
\end{proof}

\begin{lem}\label{construct-morph} Fix $n\in\Z$ and let $X,Y\in\Der\Z(\leq n)$. If $g_k:X_k\to Y_k$, $k\in\Z$ are morphisms in $\Der\0$ such that $(g_k)_{k\in\Z}$ 
is a morphism in $\Der\0^\Z$ (that is, the underlying diagram commutes), then there is a morphism $g:X\to Y$ in $\Der\Z$ such that $\dia_\Z(g)_k=g_k$, for all $k\in\Z$.  
\end{lem}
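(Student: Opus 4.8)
The plan is to reduce this to diagrams over \emph{finite} subposets of $\Z$, where the required lifting follows from strongness of $\DER$ (that is, from $\dia_\one\colon\Der\one\to\Der\0^\one$ being full and essentially surjective), and then to obtain $g$ as a morphism induced on a homotopy colimit. First I set up finite approximations: for $m\leq n$ put $X(\geq m)=[\geq m]_!\cdot[\geq m]^*X$ and $Y(\geq m)=[\geq m]_!\cdot[\geq m]^*Y$ as in Lemma~\ref{D>n-properties}(e). Since $X,Y\in\Der\Z(\leq n)$, Lemma~\ref{pointwise} shows that these vanish in degrees $>n$, so, writing $\lambda$ for the inclusion into $\Z$ of the finite interval $\Z_{\geq m}\cap\Z_{\leq n}=\{m,m+1,\dots,n\}$, one gets a canonical isomorphism $X(\geq m)\cong\lambda_!\lambda^*X$, and similarly for $Y$; as $\lambda_!$ is fully faithful with right adjoint $\lambda^*$, this yields a natural bijection, compatible with underlying diagrams,
\[
\Der\Z\bigl(X(\geq m),Y\bigr)\ \cong\ \Der{\{m,\dots,n\}}\bigl(X|_{\{m,\dots,n\}},Y|_{\{m,\dots,n\}}\bigr).
\]
Moreover, restricting $X$ along the functor $\max\colon\Z\times\Z\to\Z$ produces a coherent tower whose $m$-th term is $X(\geq m)$, and from Lemma~\ref{pointwise} one checks degreewise that $X\cong\hocolim_{m\to-\infty}X(\geq m)$, a homotopy colimit of this tower.

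Next I construct, by downward induction on $m\leq n$, morphisms $\bar h_m\colon X|_{\{m,\dots,n\}}\to Y|_{\{m,\dots,n\}}$ in $\Der{\{m,\dots,n\}}$ such that $\dia(\bar h_m)_k=g_k$ for $m\leq k\leq n$ and such that $\bar h_m$ restricts to $\bar h_{m+1}$ over $\{m+1,\dots,n\}$. Take $\bar h_n=g_n$. For the inductive step, use the derivator decomposition $\Der{\{m,\dots,n\}}\simeq\Der{\{m+1,\dots,n\}}\times_{\Der\0}\Der\one$ obtained by gluing along the common object $m+1$: a morphism $X|_{\{m,\dots,n\}}\to Y|_{\{m,\dots,n\}}$ then amounts to a morphism over $\{m+1,\dots,n\}$ together with a morphism $[\,x_{m+1}\colon X_{m+1}\to X_m\,]\to[\,y_{m+1}\colon Y_{m+1}\to Y_m\,]$ in $\Der\one$, the two agreeing at the object $X_{m+1}$ of $\Der\0$ --- and, since $\dia_\0=\id$, this agreement is literal equality. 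Take the first component to be $\bar h_{m+1}$, whose value at $m+1$ is $g_{m+1}$; for the second, the square with sides $x_{m+1},g_{m+1},g_m,y_{m+1}$ commutes --- this is exactly the hypothesis that $(g_k)$ is a morphism in $\Der\0^\Z$ --- and lifts, by fullness of $\dia_\one$, to a morphism $[\,x_{m+1}\,]\to[\,y_{m+1}\,]$ in $\Der\one$ whose value at the source is then precisely $g_{m+1}$. Gluing the two components yields $\bar h_m$ with the required properties.

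Transporting each $\bar h_m$ through the bijection above gives $h_m\colon X(\geq m)\to Y$ in $\Der\Z$, and the restriction condition becomes $h_m\circ\bigl(X(\geq m+1)\to X(\geq m)\bigr)=h_{m+1}$, so that $(h_m)_m$ is an element of $\varprojlim_m\Der\Z\bigl(X(\geq m),Y\bigr)$. Since $X\cong\hocolim_{m\to-\infty}X(\geq m)$, the Milnor exact sequence (arising from the standard triangle presenting a sequential homotopy colimit in the triangulated category $\Der\Z$) makes $\Der\Z(X,Y)\to\varprojlim_m\Der\Z\bigl(X(\geq m),Y\bigr)$ surjective, so there is a morphism $g\colon X\to Y$ restricting to $h_m$ on each $X(\geq m)$. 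Evaluating at $k\in\Z$ and using that $X(\geq m)\to X$ is an isomorphism in degree $k$ as soon as $m\leq k$, one reads off $\dia_\Z(g)_k=g_k$ for all $k$, as required.

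The step I expect to be the main obstacle is getting past a coherence trap, and avoiding it is what shapes the whole strategy. The naive attempt --- building a morphism of the coherent towers $\bigl(X(\geq m)\bigr)_m\to\bigl(Y(\geq m)\bigr)_m$ --- is circular, being itself a ``lift an underlying diagram'' problem, now over an infinite poset; mapping instead into the \emph{fixed} target $Y$ replaces this by the need for a merely \emph{levelwise} compatible family $(h_m)$, i.e.\ a family of elements of $\Hom$-sets satisfying equations, which \emph{can} be built by finite induction. Accordingly the two delicate inputs are the precise gluing/recollement statement for the sieve--cosieve decomposition of a finite interval (which guarantees that the inductive step introduces nothing beyond lifting a single square, handled by strongness) and the degreewise identification $X\cong\hocolim X(\geq m)$ together with the Milnor sequence. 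One must also resist completing the obvious morphisms of triangles ``by hand'', since an arbitrary such completion need not carry the prescribed underlying diagram --- which is exactly why the finite-level lifts have to be produced from the strongness axiom via the gluing rather than from the triangulated structure of $\Der\Z$.
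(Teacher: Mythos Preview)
Your overall architecture is reasonable, but there is a genuine gap in the inductive step. The claimed equivalence
\[
\Der{\{m,\dots,n\}}\ \simeq\ \Der{\{m+1,\dots,n\}}\times_{\Der\0}\Der\one
\]
is not true for a general stable derivator: it would force the underlying--diagram functor over finite linear orders to be faithful, which it is not (already $\dia_\one$ fails faithfulness in the derivator of spectra). Consequently you cannot simply ``glue'' $\bar h_{m+1}$ with a chosen lift $\sigma\in\Der\one$ of the square and declare the result to be $\bar h_m$; there is no such gluing functor. What you actually need is only the weaker statement that \emph{some} $\bar h_m$ exists with $i^*\bar h_m=\bar h_{m+1}$ and $(\bar h_m)_m=g_m$, where $i:\{m+1,\dots,n\}\hookrightarrow\{m,\dots,n\}$ is the sieve. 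This is true, but it requires a recollement argument rather than a fiber product: complete the counit $i_!i^*X'\to X'$ to a triangle $i_!i^*X'\to X'\to j_!Z\to\Sigma(\cdot)$ with $j:\{m\}\hookrightarrow\{m,\dots,n\}$ the complementary cosieve and $Z=\mathrm{cone}(x_{m+1})$; the obstruction to extending the adjoint map $i_!i^*X'\to Y'$ over $X'$ lies in $\Der\0(\Sigma^{-1}Z,Y_m)$ and vanishes exactly because $g_m x_{m+1}=y_{m+1}g_{m+1}$, and a further correction by a map factoring through $j_!Z$ lets you force the value at $m$ to be $g_m$. With this repair your induction does go through, and the $\max^*$/Milnor part of the argument is fine (indeed $\max^*\cong\Delta_!$ since $\Delta\dashv\max$, so $(p_2)_!\max^*X\cong X$ on the nose).

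That said, the paper's proof is a two--line shortcut that avoids all of this. Since $X,Y\in\Der\Z(\leq n)$ and $[\leq n]:\Z_{\leq n}\to\Z$ is a cosieve, the counit $[\leq n]_![\leq n]^*X\to X$ is an isomorphism, so it suffices to lift the morphism over $\Z_{\leq n}\cong\N$. The paper then simply invokes \cite[Proposition~11.3]{KN13}, which says precisely that for a strong derivator the underlying--diagram functor $\Der\N\to\Der\0^{\N}$ is full (and essentially surjective). Your entire inductive construction together with the Milnor--sequence passage to the limit is, in effect, a reproof of that proposition from scratch; it is instructive, but once the gluing step is stated correctly you are reproducing exactly the content of the cited reference.
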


\begin{proof}
 First use \cite[Proposition 11.3]{KN13} in order to construct a map $g'$ in $\Der{\Z_{\leq n}}$ such that $g'_k=g_k$ for all $k\leq n$. But $[\leq n]:\Z_{\leq n}\to\Z$ 
 is a cosieve in the sense of \cite[Definition 1.22]{Gr13}. Thus, by \cite[Proposition 3.6]{Gr13}, the functor 
 $[\leq n]_!:\Der{\Z_{\leq n}}\to\Der\Z$ acts by completing with zero. Thus the map $g=[\leq n]_!(g')$ has the requested properties.  
\end{proof}

\begin{lem}\label{f7} 
 With the notations above, every map $f:X\to Y$ in $\Der\Z$ fits in a $3\times3$ diagram as required in the axiom F7.  
\end{lem}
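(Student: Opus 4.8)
The plan is to exhibit the required $3\times 3$ diagram as the underlying diagram of a single \emph{coherent} diagram in the derivator, assembled entirely out of bicartesian squares; a morphism of triangles arising in this way is automatically middling--good in the sense of \cite[Definition~2.4]{Nee91}, since in a stable derivator coherence rules out the sign twists that may otherwise obstruct such completions. The task thus splits in two: first realize coherently the morphism of triangles displayed in the solid part of axiom~F7, then take homotopy cofibres inside the derivator.

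First I identify that morphism of triangles. By Lemma~\ref{D>n-properties}(e) the pair $\bigl(\Der\Z(\geq 1),\Der\Z(\leq 0)\bigr)$ underlies a Bousfield localization sequence, hence a stable t-structure, so every $W\in\Der\Z$ carries a canonical triangle $W(\geq 1)\to W\to W(\leq 0)\overset{+}\to$, functorially in $W$. Since $s=[-1]^*$ is a triangle autoequivalence by Lemma~\ref{s-alpha}, applying $s$ to the triangle of $Y$ gives precisely the second row $s(Y(\geq 1))\to s(Y)\to s(Y(\leq 0))\overset{+}\to$ of the F7 diagram, while naturality of $\alpha$ makes $\bigl(\alpha(Y(\geq 1)),\alpha(Y),\alpha(Y(\leq 0))\bigr)$ a morphism of triangles from the $Y$-triangle to its $s$-shift. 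Precomposing it with the morphism of triangles induced by $f$ yields exactly the solid part of F7, with vertical maps $\alpha(Y(\geq 1))\cdot f(\geq 1)$, $\alpha(Y)\cdot f$ and $\alpha(Y(\leq 0))\cdot f(\leq 0)$.

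To realize this coherently, note that $\DER^{\one}$ is again a stable derivator by \cite[Theorem~1.25]{Gr13}; arguing as in the proof of Lemma~\ref{construct-morph} and using \cite[Proposition~11.3]{KN13}, one lifts $f:X\to Y$ to a coherent morphism $\bar f\in\Der{\Z\times\one}$. The truncation functors $W\mapsto W(\geq 1)$ and $W\mapsto W(\leq 0)$ are, by Lemma~\ref{D>n-properties}, built from the derivator functors $[\geq 1]^*$, $[\geq 1]_!$ and the localization sequence, hence extend to the diagram categories of $\DER^{\one}$, and $\alpha$, being induced by a $2$-cell of $\Cat$, acts on every $\Der{K\times\one}$; applying these operations to $\bar f$ produces a coherent realization of the entire F7 morphism of triangles. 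Taking homotopy cofibres of its three vertical maps inside the stable derivator — an exact operation by \cite[Theorem~4.16 and Corollary~4.19]{Gr13} — gives objects $Z'$, $Z$, $Z''$ together with a triangle $Z'\to Z\to Z''\overset{+}\to$; pasting the resulting bicartesian squares yields a coherent diagram whose underlying diagram is the sought $3\times3$ array, having the two prescribed triangles as its first rows and the triangle of cofibres as its third row, and which is middling--good because it is assembled from bicartesian squares.

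The step I expect to be the main obstacle is the coherent assembly just described: one has to verify that applying the truncation functors and $\alpha$ to the coherent lift $\bar f$ really returns, on underlying diagrams, the maps $f(\geq 1)$, $f(\leq 0)$ and the triangles written in F7 — and care is needed here, since $s$ shifts indices and does not commute on the nose with $(\geq n)$ and $(\leq n)$, so the truncations must be applied to $f$ \emph{before} shifting — together with a final check that the resulting diagram does satisfy Neeman's \cite[Definition~2.4]{Nee91}. Granting these identifications, the remainder is a routine application of the stable derivator formalism recalled above.
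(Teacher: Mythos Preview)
Your strategy is essentially the paper's: realize the solid part of the F7 diagram as the underlying diagram of a coherent object in $\DER^{\Z}$, then take cones inside the derivator to produce the $3\times3$ array. The difference is in how the coherent lift is assembled. You propose lifting $f$ to $\bar f\in\Der{\Z\times\one}$ and then applying the truncation functors and $\alpha$ to $\bar f$; the paper instead uses strongness of $\DER^{\Z}$ to lift each already--formed commutative square (the $\alpha$--square for $g:Y(\geq1)\to Y$ and the truncation square for $f$) directly to an object of $\DER^{\Z\times\one}(\one)$, applies the cone construction $T=i_{1!}\cdot i_{0*}$ to each, and then glues the two cubes along the common $Y$-row using \cite[Proposition~3.13]{Gr13}. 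Lifting the squares rather than the single arrow $f$ sidesteps the question of how to form the composite $\alpha(Y)\cdot f$ coherently.

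The step you flag as the main obstacle is indeed where the paper spends most of its effort. What has to be checked is that $\alpha$, as a modification $\id_{\DER^{\Z}}\rightsquigarrow s$, satisfies $\alpha_J(u_!(X))=u_!(\alpha_I(X))$ and $\alpha_J(u_*(X))=u_*(\alpha_I(X))$ for every $u:I\to J$; this is what guarantees that the underlying diagram of $T(\tilde g)$ really is the $\alpha$-cube displayed in F7. The paper proves this by invoking \cite[Proposition~2.5]{Gr13} (evaluation functors preserve homotopy Kan extensions) together with the definition of $\alpha$ via the $2$-cell $\phi:\id_{\Z}\Rightarrow[-1]$. Your sketch is correct in outline, but you would need to supply exactly this computation to close the gap you identified. (A minor point: lifting $f$ to $\bar f$ needs only the strongness axiom---essential surjectivity of $\dia_{\one,\Z}$---and not \cite[Proposition~11.3]{KN13}, which in Lemma~\ref{construct-morph} is used for a different purpose.)
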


\begin{proof}
First, observe that the autoequivalence $s:\Der\Z\to\Der\Z$ and the natural transformation $\alpha:\id_{\Der\Z}\Rightarrow s$ induce a morphism of derivators 
$s:\DER^\Z\to\DER^\Z$, that is a pseudo-natural transformation (see \cite[Definition 7.5.2]{B94}) as in the definition from \cite[Section 2.1]{Gr13}), respectively
a modification (see \cite[Definition 7.5.3]{B94}) $\alpha:\id_{\DER^Z}\rightsquigarrow s$. Indeed, for every small category $I$, we define 
\[s_I=([-1]\times\id_I)^*:\DER^\Z(I)=\Der{\Z\times I}\to\Der{\Z\times I}=\DER^\Z(I)\]
and \[\alpha_I=(\phi\times\id_{\id_I})^*:\id_{\DER^\Z(I)}=\id_{\Der{\Z\times I}}\Rightarrow s_I,\] satisfying the coherence relations required 
in the definitions of the respective notions. Moreover, since $s$ is an autoequivalence, it follows that it preserves homotopy Kan extensions in the sense of 
\cite[Definition 2.2]{Gr13}. That is, for every functor $u:I\to J$ we have $s_J\cdot u_!=u_!\cdot s_I$ and $s_J\cdot u_*=u_*\cdot s_I$, where 
$u_!,u_*:\DER^\Z(I)\to\DER^\Z(J)$ are the left, respectively right adjoint of $u^*=\DER^\Z(u)$. We claim that for every $X\in\DER^\Z(I)$ we have 
\[\alpha_J(u_!(X))=u_!(\alpha_I(X))\hbox{ and }\alpha_J(u_*(X))=u_*(\alpha_I(X)).\] We focus on the first equality, the argument for the second being similar.
For $k\in\Z$ consider the morphism of derivators $k^*:\DER^\Z\to\DER$ and for an $X\in\DER^\Z(I)$ denote  $Y=u_!(X)\in\DER^\Z(J)$. 
If by $X_{k_I}$ we denote the image of $X$ under the functor $k^*_I:\DER^\Z(I)\to\Der I$, then $\dia_{\Z,I}(X)$ and $\dia_{\Z,I}(Y)$ are the diagrams: 
\[\ldots\leftarrow X_{-1_I}\overset{x_{0_I}}\leftarrow X_{0_I}\overset{x_{1_I}}\leftarrow X_{1_I}\leftarrow\ldots,\] respectively 
\[\ldots\leftarrow Y_{-1_J}\overset{y_{0_J}}\leftarrow Y_{0_J}\overset{y_{1_J}}\leftarrow Y_{1_J}\leftarrow\ldots.\]  By definition of $\alpha$ we have 
$\alpha_J(Y)_{k_J}=y_{k_J}$ and $\alpha_I(X)_{k_I}=x_{k_I}$. According to \cite[Proposition 2.5]{Gr13}, the $k^*$ preserves homotopy Kan extensions, 
hence  we have $Y_{k_J}=u_!(X)_{k_J}=u_!(X_{k_I})$, so $\alpha_J(u_!(X))=\alpha_J(Y)=u_!(\alpha_{I}(X))$.

Recall that the triangulated structure in $\Der I$, were $I$ is an arbitrary small category, is constructed as follows: 
Let $K=\two\times\one\setminus\{(1,1),(2,1)\}$, $i_0:\one\to K$, $i_0(x)=(x,0)$ and $i_1:K\to\two\times\one$ be the inclusion functor. 
Denote by $T:\DER^I(\one)\to\DER^I({\two\times\one})$ the composition
$i_{1!}\cdot i_{0*}$. A (distinguished) triangle in $\Der I$ is a sequence
\[T(f)_{(0,0)}\to T(f)_{(1,0)}\to T(f)_{(1,1)}\to T(f)_{(2,1)}\]for some $f\in\DER^I(\one)$ (see \cite[Theorem 4.16]{Gr13}).  

Now let $g:Y'\to Y$ be a map in $\Der\Z$ and complete it to a triangle $Y'\overset{g}\to Y\to Y''\overset{+}\to.$ The square 
\[\diagram Y'\rto^{g}\dto_{\alpha(Y')} & Y\dto^{\alpha(Y)} \\	
 s(Y')\rto_{s(g)} & s(Y)
\enddiagram\]
can be viewed as a morphism in $\DER^\Z(\one)$ between $\alpha(Y')$ and $\alpha(Y)$, or equivalently as an object in $\DER^\Z(\one)^\one$. Since, together with $\DER$,  
the derivator $\DER^\Z$ is strong too, we find an object $\tilde g$ in $\DER^\Z(\one\times\one)=\DER^{\Z\times\one}(\one)$ whose underlying diagram is the square above. Using the claim 
showed in the first part of this proof, we know that the underlying diagram of the $T(\tilde g)\in\DER^{\Z\times\one}(\two\times\one)$ is 
\[\mbox{\diagram 
& Y'\rrto^{g}\ddto\dlto_{\alpha(Y')} & & Y\rrto\ddto\dlto_{\alpha(Y)}& &0\ddto\dlto \\
s(Y')\ddto\rrto^{\ \ \ \ \ \ s(g)}& &s(Y)\rrto\ddto & & 0\ddto& \\
& 0\rrto\dlto& &Y''\rrto\dlto_{\alpha(Y'')} & &\Sigma Y'\dlto_{\alpha(\Sigma Y')} \\
0\rrto& &s(Y'')\rrto & &\Sigma s(Y') &
\enddiagram}\]

Fix now $f:X\to Y$ in $\Der\Z$ as in the hypothesis. Because of the uniqueness of the morphisms $f(\geq1)$ and $f(\leq0)$ the diagram 
\[\mbox{\tiny\diagram 
& X(\geq 1)\rrto\ddto\dlto_{f(\geq1)} & & X\rrto\ddto\dlto_{f}& &0\ddto\dlto\\
Y(\geq1)\ddto\rrto& &Y\rrto\ddto & & 0\ddto&\\
& 0\rrto\dlto& &X(\leq0)\rrto\dlto_{f(\leq0)} & &\Sigma X(\geq1)\dlto_{\Sigma f(\geq1)}\\
0\rrto& &Y(\leq0)\rrto & &\Sigma Y(\geq1) &
\enddiagram}\]
is the underlying diagram of $T(\tilde f)\in\DER^{\Z\times\one}(\two\times\one)$, where $\tilde f$ in $\DER^{\Z\times\one}(\one)$ is an object whose underling diagram is 
the first upper square above. Now we look at the last two diagrams. For both the left and the right cube, are bicartesian squares in the sense of \cite[Definition 4.1]{Gr13}, in 
$\Der{\Z\times\one}=\DER^\Z(\one)$. If, in particular, we take the map $g$ above to be $Y(\geq 1)\to Y$ then we can glue the two diagrams and 
\cite[Proposition 3.13]{Gr13} tells us that 
both the left and the right part of the obtained diagram are bicartesian squares in $\DER^\Z(\one)$. Therefore the whole diagram obtained after gluing, 
produces a triangle: \[\alpha(Y(\geq1)\cdot f(\geq1)\to\alpha(Y)\cdot f\to\alpha(Y(\leq0))\cdot f(\leq0)\overset{+}\to\] in 
$\DER^\Z(\one)$.
Finally the cone functor ${\mathsf C}:\DER^\Z(\one)\to\DER^\Z(\0)=\Der\Z$ defined as in \cite[Definition 3.18(3)]{Gr13} is triangulated. Applying it to the above triangle in  
$\DER^\Z(\one)$ we obtain the $3\times3$ diagram whose rows and columns are triangles in $\Der\Z$ exactly as it is required in F7.
\end{proof}

Remark that the proof of the above Lemma suggests that middling--good morphisms of triangles in $\Der\0$
are actually diagrams representing  triangles in $\Der\one$. 

\begin{thm}\label{main-thm}
For every stable derivator $\DER$, the underlying triangulated category $\Der\0$  has an f-enhancement, satisfying the additional axiom F7. 
\end{thm}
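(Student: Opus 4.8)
The plan is to realise the f-enhancement as a full triangulated subcategory of $\Der\Z$. One cannot simply take $\X=\Der\Z$, because axiom (F2) fails there: for instance a non-zero constant diagram lies in $\bigcup_n\Der\Z(\geq n)$ but in no $\Der\Z(\leq m)$. So let $\X\subseteq\Der\Z$ be the full subcategory of \emph{boundedly filtered} objects, namely those $X$ lying in $\Der\Z(\geq n)\cap\Der\Z(\leq m)$ for some integers $n\leq m$; by Lemma~\ref{D>n-properties}(a) and the definition of $\Der\Z(\leq m)$ these are exactly the $X$ whose underlying diagram $(\dagger)$ has $x_k$ invertible for $k\ll 0$ and $X_k=0$ for $k\gg 0$. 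Since $\Der\0$ is triangulated and the maps of a cone are built from those of its entries, $\X$ is a triangulated subcategory, and it is plainly stable under the autoequivalence $s$ of Lemma~\ref{s-alpha} and its inverse (these only reindex). We therefore restrict $s$ and $\alpha$ to $\X$ and put $\X(\geq 0)=\Der\Z(\geq 0)\cap\X$ and $\X(\leq 0)=\Der\Z(\leq 0)\cap\X$ (triangulated subcategories, by Lemma~\ref{D>n-properties}(c)). From $s(X)_k=X_{k-1}$ one gets $s^n\Der\Z(\geq 0)=\Der\Z(\geq n)$ and $s^n\Der\Z(\leq 0)=\Der\Z(\leq n)$, so the subcategories $\X(\geq n)=s^n\X(\geq 0)$ and $\X(\leq n)=s^n\X(\leq 0)$ appearing in the axioms are just $\Der\Z(\geq n)\cap\X$ and $\Der\Z(\leq n)\cap\X$.

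Most axioms now come from the lemmas already proved. Axiom (F1) is Lemma~\ref{D>n-properties}(b); (F2) holds by the definition of $\X$; (F5) is part of Lemma~\ref{s-alpha}. For (F3) and (F4) the input is Lemma~\ref{D>n-properties}(e) with $n=1$, giving a Bousfield localisation sequence $\Der\Z(\geq 1)\rightleftarrows\Der\Z\rightleftarrows\Der\Z(\leq 0)$, i.e.\ a stable t-structure $(\Der\Z(\geq 1),\Der\Z(\leq 0))$ in $\Der\Z$: (F3) is its orthogonality (T2), inherited by the full subcategory $\X$, and (F4) is its truncation triangle $X(\geq 1)\to X\to X(\leq 0)\overset{+}\to$, whose outer terms are again boundedly filtered when $X$ is, as one reads off from Lemma~\ref{pointwise} and Lemma~\ref{D>n-properties}(d). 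Finally (F7) is Lemma~\ref{f7}: applied to $f\colon X\to Y$ in $\Der\Z$ it produces the required $3\times 3$ diagram, and when $X,Y\in\X$ all nine objects in it (truncations of $X$ and of $sY$, and cones of maps between such) inherit boundedness, so the whole diagram lives in $\X$.

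It remains to check (F6) and to identify the heart. For (F6), fix $X\in\X(\geq 1)$ and $Y\in\X(\leq 0)$. Since $[\leq 0]\colon\Z_{\leq 0}\to\Z$ and $[\leq 1]\colon\Z_{\leq 1}\to\Z$ are cosieves, the functors $[\leq 0]_!$ and $[\leq 1]_!$ extend by zero (\cite[Proposition~3.6]{Gr13}), so $Y\cong[\leq 0]_![\leq 0]^*(Y)$ and $sY\cong[\leq 1]_![\leq 1]^*(sY)$; meanwhile $X\in\Der\Z(\geq 1)=\Img[\geq 1]_!$ (Lemma~\ref{D>n-properties}(a)) is constant equal to $X_1$ in every degree $\leq 1$, so $[\leq 0]^*(X)$ and $[\leq 1]^*(X)$ are constant diagrams with value $X_1$. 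Passing through the adjunctions $([\leq 0]_!,[\leq 0]^*)$ and $([\leq 1]_!,[\leq 1]^*)$ and then through the homotopy-colimit adjunction identifies both $\X(sY,X)$ and $\X(Y,X)$ with $\Der\0(\hocolim_{\Z_{\leq 0}}[\leq 0]^*(Y),\,X_1)$ — here one uses that $[\leq 1]^*(sY)$ on $\Z_{\leq 1}$ becomes $[\leq 0]^*(Y)$ on $\Z_{\leq 0}$ under the order isomorphism $k\mapsto k+1$ — and tracking the identifications shows the induced bijection $\X(sY,X)\to\X(Y,X)$ is precomposition with $\alpha(Y)$. For the heart, $\X(\geq 0)\cap\X(\leq 0)=\Der\Z(\geq 0)\cap\Der\Z(\leq 0)$ (such objects are automatically boundedly filtered), which consists exactly of the objects constant in degrees $\leq 0$ and zero in degrees $\geq 1$; the left homotopy Kan extension $0_!\colon\Der\0\to\Der\Z$ along $0\colon\0\to\Z$ has essential image precisely this subcategory, is fully faithful by \cite[Proposition~1.20]{Gr13} and is a triangle functor, hence corestricts to the required equivalence $\Der\0\overset{\sim}\longrightarrow\X(\geq 0)\cap\X(\leq 0)$. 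I expect (F6) to be the main obstacle, being the one axiom not supplied directly by the preceding lemmas and requiring one to transport the transformation $\alpha$ through a chain of adjunctions.
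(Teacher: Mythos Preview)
Your overall architecture matches the paper's exactly: you take $\X$ to be the boundedly filtered objects in $\Der\Z$, restrict $s$ and $\alpha$, and derive F1--F5 and F7 from the same lemmas. The identification of the heart via $0_!$ is also identical.

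The genuine difference is in F6. The paper argues pointwise: given $g:s(Y)\to X$ with $g\cdot\alpha(Y)=0$, it uses the naturality square for $g$ together with the invertibility of $x_k$ (for $k\leq1$) to deduce $g_k=0$ for all $k$, then invokes strongness and D2 to conclude $g=0$; for surjectivity it builds the components $g_k$ by hand and lifts them via Lemma~\ref{construct-morph}, again using strongness and D2 to identify $g\cdot\alpha(Y)$ with $f$. Your adjunction-based route is more conceptual and, if it goes through, has the pleasant feature of not invoking strongness at this point (though F7 still needs it through Lemma~\ref{f7}).

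That said, there are two places where your F6 sketch needs more than you have written. First, when you say $[\leq0]^*(X)$ and $[\leq1]^*(X)$ are ``constant diagrams with value $X_1$'', you need this in the derivator sense---i.e., in the essential image of the appropriate $\pi^*$---and not merely that the underlying diagram has invertible transition maps. This does hold, but it requires an argument: one way is to check that the square with corners $\0,\Z_{\geq1},\Z_{\leq1},\Z$ (the first map picking out $1$) is homotopy exact, so that $[\leq1]^*[\geq1]_!\cong 1_!\,1^*\cong\pi_{\leq1}^*\,1^*$, using that $1$ is initial in $\Z_{\leq1}$. Second, the claim that ``tracking the identifications shows the induced bijection is precomposition with $\alpha(Y)$'' is precisely the content of F6 and cannot be left as an assertion; you must actually chase $\alpha$ through the two adjunction isomorphisms and the shift, and you acknowledge as much in your final sentence. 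Neither gap is fatal, but both need to be filled before the argument is complete.
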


\begin{proof} Let 
\[\X=\bigcup_{m\leq n}\left(\Der\Z(\geq m)\cap\Der\Z(\leq n)\right).\] According to Lemma \ref{D>n-properties}(b), $\Der\Z(\geq m)$ and $\Der\Z(\leq n)$ 
are triangulated subcategories of $\Der\Z$, hence the same is true for the intersection. Further $\X$ is triangulated 
as a directed union of triangulated subcategories. Note that an object $X\in\X$ has the underling diagram of the form:
\[(\ddagger)\ \ldots\underset{\cong}\leftarrow X_{m-1} \underset{\cong}{\overset{x_m}\leftarrow} X_{m}\leftarrow X_{m+1}\leftarrow 
\ldots \leftarrow X_{n-1}\overset{x_n}\leftarrow X_{n}\leftarrow 0\leftarrow 0\leftarrow\ldots \]

We denote 
$\X(\geq n)=\X\cap\Der\Z(\geq n)$ and $\X(\leq n)=\X\cap\Der\Z(\leq n)$, for all $n\in\Z$.  Since for all $k\in\Z$ we have $s(X)_k=X_{k-1}$, it follows  
$s(X)\in\X$ for every $X\in\X$, hence $s$ induced a well-defined functor (denoted with the same symbol) $s:\X\to X$. 
 The restriction the natural morphism $\alpha$ gives a natural morphism $\alpha:\id_\X\Rightarrow s$.  
 We obviously have $s\X(\geq n)=\X(\geq n+1)$ and $s\X(\leq n)=\X(\leq n+1)$.
 
 The functor $0:\0\to\Z$ is fully faithful, therefore \cite[Proposition 1.20]{Gr13} implies $0_!:\Der\0\to\Der\Z$ has the same property. 
 Moreover for all $T\in\Der\0$ and $k\in\Z$ we have $0_!(T)_k=\hocolim_{\0/k}\pi^*(T)$, where  
\[\0/k=\{(0,0\to k)\}\cong\begin{cases}0\hbox{ for }k\leq 0\\ \emptyset\hbox{ for }k>0\end{cases}\] is 
the category of objects $\0$-over $k$ and $\pi:\0/k\to e$ is the functor which forgets the morphism component. Therefore 
\[0_!(T)_k\cong\begin{cases}T\hbox{, for }k\leq 0\\ 0\hbox{, for }k>0\end{cases},\]
thus $\Img0_1=\X(\geq0)\cap\X(\leq0)$ and $0_!$ induces an equivalence between $\Der\0$ and $\X(\geq0)\cap\X(\leq0)$.

 We claim that $(\X, \X(\geq 0),\X(\leq 0), s,\alpha)$ is an f-enhancement for $\Der\0$, hence we have to verify the axioms F1-F6. 
 
 The axiom F1 follows by Lemma \ref{D>n-properties}(a), the axiom F5 follows by Lemma \ref{s-alpha} and the axiom F2 is direct a consequence of the construction of $\X$. 
 
 The axioms F3 and F4 are equivalent to the condition that \[\X(\geq n)\rightleftarrows\X\rightleftarrows\X(\leq n-1)\] 
 is a Bousfield localization sequence.  
 In order to prove this it is enough to observe that the functors from the Bousfield localization sequence of Lemma \ref{D>n-properties}(e) 
 restrict well between subcategories involved here.  

For proving F6 let $X\in\X(\geq 1)$ and $Y\in\X(\leq 0)$, that is $X_k\cong X_1$ for all $k\leq 1$ and $Y_k=0$ for all $k>0$. We want to show that the map 
\[\varphi:\X(s(Y),X)\to\X(Y,X)\hbox{ given by }\varphi(g)=g\cdot\alpha(Y)\] is bijective. 
If $\varphi(g)=0$ for some $g:s(Y)\to X$, then $g\cdot\alpha(Y)=0$ so $(g\cdot\alpha(Y))_k=0$ for all $k\in\Z$. Since for $k\leq 1$ the map $x_k:X_{k}\overset{\cong}\longrightarrow X_{k-1}$ is an isomorphism, 
the construction of $\alpha$, 
implies $g_k=(g\cdot\alpha(Y))_{k-1}=0$. 
On the other side, for $k>1$ we have $g_k=0$ because $s(Y)_k=Y_{k-1}=0$. Therefore all $g_k$ vanish. Observe that $g$ is an object in $\Der\Z^\one$, 
hence by the strongness assumption on $\DER$, there is an object $\tilde g\in\Der{\Z\times\one}$ such that 
$\dia_{\one,\Z}(\tilde g)=g$. We deduce $(\tilde g)_k=g_k=0$ 
for all $k\in\Z$, therefore the axiom D2 tells us that $\tilde g=0$, and   
consequently $g=0$. This proves that $\varphi$ is injective. For showing that 
$\varphi$ is surjective, let $f\in\X(Y,X)$. Then  take 
\[g_k=\begin{cases} x_k^{-1}f_{k-1}:s(Y)_k=Y_{k-1}\to X_{k-1}\overset{\cong}\to X_k\hbox{, for }k\leq 1\\  0:s(Y)_k=0\to X_k\hbox{, for }k>1\end{cases}.\] 
In this way we constructed a morphism 
$(g_k)_{k\in\Z}$ in $\Der\0^\Z$. According to Lemma \ref{construct-morph}, we find a morphism $g:s(Y)\to X$ in $\Der\Z$, or equivalently in $\X$, 
such that $g\cdot\alpha(Y)_k=f_k$ for all $k\in\Z$. Noting that our assumptions imply that the categories $\Der J$ and the functors $u^*, u_*, u_!$ are additive for all  functors
$u:J\to K$ in $\Cat$ (see \cite[Corollary 4.14]{Gr13}), 
it follows  $(g\cdot\alpha(Y)-f)_k=0$, and the same argument as before shows that from the strongness and D2 we deduce 
$g\cdot\alpha(Y)=f$. 

Finally, the axiom F7 follows by Lemma \ref{f7}. Again we can observe that if we start with a map $f:X\to Y$ in $\X$ rather than in $\Der\Z$, then the 
whole $3\times3$ diagram constructed in this Lemma lies in $\X$. 
 \end{proof}

\begin{expl} Let $\A$ be a sufficiently nice abelian category (e.g. a Grothendieck category), as in \cite[Example 1.2 (2)]{Gr13}. 
Then the prederivator $\DER:\Cat\opp\to\CAT$, given by 
$\Der I=\D(\A^I)$, where $\D(\A^I)$ denotes the derived category of the abelian category $\A^I$, is actually a strong stable derivator. 
Recall that $\D(\A^I)$ is constructed as follows: The objects of the category of complexes $\C({\A^I})$ are
\[\ldots\to X^{n-1}\overset{d^{n-1}}\longrightarrow X^{n}\overset{d^n}\longrightarrow X^{n+1}\to\ldots\] with $X^n\in\A^I$, for all $n\in\Z$ and $d^n\cdot d^{n-1}=0$. The morphisms $d^n$, $n\in\Z$ are called {\em differentials}. Morphisms in $\C(\A^I)$ are collection of morphisms in $\A^I$ commuting with the differentials. 
Then we get a functor 
\[\Ho^n:\C(\A^Z)\to\A^\Z\hbox{ given by}\Ho^n(X)=\Ker d^n/\Img d^{n-1}\] 
called the $n$-th {\em cohomology} of $X$. 
Then $\D(\A^I)$ is obtained from $\C(\A^I)$ by formally inverting {\em quasi-isomorphisms}, that is those morphisms $f:X\to Y$ in $\C(\A^I)$ which 
induce isomorphisms in cohomology. 

On the other hand, note that objects in $\A^\Z$ are diagrams of the form 
(recall that $\Z$ is regarded as the category $(\Z,\geq)$):
\[X=\ldots\leftarrow X_{-1}\leftarrow X_0\leftarrow X_1\leftarrow\ldots,\] with $X_k\in\A$. 
Therefore an object in $X\in\C(\A^\Z)$ is a diagram of the form: 
\[X=\left(\diagram 
X^{n-1}_{k-1}\dto_{d^{n-1}_{k-1}} & X^{n-1}_k\dto^{d^{n-1}_k}\lto\\
X^{n}_{k-1} & X^n_k\lto\enddiagram\right)_{n,k\in\Z}\]
Then the f-enhancement for $\D(\A)$ constructed in Theorem \ref{main-thm} above  looks as follows: The triangulated category $\X$ is the subcategory of 
$\D(\A^Z)$ consisting of those objects for which the  
horizontal morphisms are isomorphisms for sufficiently small 
$k$ and $X^n_k=0$ for sufficiently large $k$. The autoequivalence $s$ acts on $X\in\D(\A^\Z)$ by shifting the columns, 
that 
is the initial column $k-1$ becomes the column $k$ in $s(X)$, and $\alpha$ is the obvious natural transformation. 
Note that the unique difference between our setting and the 
filtered derived category of \cite[Example A 2]{Be87} is 
the fact that we don't insist that the horizontal morphisms are the image in the derived category of monomorphisms in the category of complexes. However, checking carefully all arguments in \cite[Proposition 6.3]{Sch11}, one can observe 
that this requirement is superfluous.  
\end{expl}

We end this paper with an immediate application where we record a new proof for an already known result, 
namely we show how can be used the realization functor in 
order to compute the map spaces between Eilenberg--Mac Lane spectra. 
Recall that there are more construction of the category of spectra leading to the same, up to an equivalence, stable homotopy category. 
We follow here \cite[Section 3]{HSS99}. According to this approach, the category $\HoSp$ is (equivalent to) the homotopy category of stable model category, 
namely the model category $\Sp$ of symmetric spectra. Therefore $\HoSp=\mathrm{Ho}(\Sp)$, where by $\mathrm{Ho}$ we denote the homotopy category of a model category
, see \cite[Section 3.2]{HSS99}). For any abelian group $A$ we denote by $(A,n)$ the Eilenberg--Mac Lane spectrum with homology 
$A$ concentrated in degree $n$ (for details, see \cite{EML53}). 

\begin{prop}\label{spectra} With the notations above, we have:  \[\HoSp((A,n),(B,m))=\begin{cases} 0\hbox{ if }m<n\\
		    \Hom_\Z(A,B)\hbox{ if }m=n\\
                     \Ext_\Z(A,B)\hbox{ if }m=n+1
                    \end{cases}\]
\end{prop}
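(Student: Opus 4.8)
The plan is to realize $\HoSp$ as the underlying category of a strong stable derivator, apply Theorem~\ref{main-thm} to obtain an f-enhancement, build the associated realization functor out of $\D^b(\Ab)$, and read off the three cases from the standard t-structure on spectra together with the usual properties of realization functors. To begin, recall from \cite[Section~3.2]{HSS99} that the category $\Sp$ of symmetric spectra carries a combinatorial stable model structure, and that a combinatorial stable model category $\M$ gives rise to a strong stable derivator $I\mapsto\mathrm{Ho}(\M^I)$ (with, say, the projective model structure on diagram categories; see \cite[Examples~1.2]{Gr13}). Taking $\M=\Sp$ yields a strong stable derivator $\DER$ with $\Der\0=\mathrm{Ho}(\Sp)=\HoSp$, so Theorem~\ref{main-thm} gives an f-enhancement $\X$ of $\HoSp$ satisfying also F7.

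Next, equip $\HoSp$ with the connective (Postnikov) t-structure, whose aisle is the subcategory of connective spectra; its heart is canonically equivalent to $\Ab$ via $X\mapsto\pi_0 X$, and under this equivalence the abelian group $A$ corresponds to the Eilenberg--Mac Lane spectrum $(A,0)$, while $(A,n)=\Sigma^n(A,0)$ for every $n\in\Z$. Feeding $\X$ and this t-structure into the construction of the realization functor --- as in \cite[Appendix]{Be87}, \cite[Section~3]{PV17} and \cite{V18}, the axiom F7 being precisely what forces the output to be a triangle functor --- produces a triangulated, t-exact functor $\real\colon\D^b(\Ab)\to\HoSp$ whose restriction to the hearts is the above equivalence; in particular $\real(A)=(A,0)$ and $\real(B[j])=\Sigma^j(B,0)=(B,j)$ for all $A,B\in\Ab$ and $j\in\Z$. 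By the standard properties of realization functors (\cite{BBD82}, \cite[Proposition~3.2]{PV17}), for $A,B$ in the heart the induced map
\[\D^b(\Ab)(A,B[j])\longrightarrow\HoSp\big((A,0),(B,j)\big)\]
is bijective for $j=0$ (i.e.\ $\real$ is fully faithful on the heart) and for $j=1$.

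Now the computation is immediate. Since $\Sigma$ is an autoequivalence, $\HoSp((A,n),(B,m))=\HoSp\big((A,0),(B,m-n)\big)$; write $j=m-n$. For $j=0$ this equals $\D^b(\Ab)(A,B)=\Hom_\Z(A,B)$; for $j=1$ it equals $\D^b(\Ab)(A,B[1])=\Ext^1_\Z(A,B)=\Ext_\Z(A,B)$. For $j<0$, i.e.\ $m<n$, the object $(A,0)$ lies in the heart, hence in $\HoSp^{\le 0}$, whereas $(B,j)=(\real B)[j]$ is a $(-j)$-fold desuspension of a heart object with $-j\ge 1$, hence lies in $\HoSp^{\ge 1}$; by axiom (T2) of the t-structure, $\HoSp((A,0),(B,j))=0$. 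This is the asserted formula.

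The formal manipulations above are routine once the inputs are available, so the genuine content lies in the inputs: (i) that $\HoSp$ underlies a \emph{strong} stable derivator, so that Theorem~\ref{main-thm} applies; (ii) the classical identification of the heart of the connective t-structure with $\Ab$ and of $(A,n)$ with $\Sigma^n$ of a heart object; and, most importantly, (iii) that the realization functor attached to \emph{our} f-enhancement --- the one manufactured by Theorem~\ref{main-thm}, rather than the filtered derived category --- still enjoys the isomorphism on $\Hom$ and $\Ext^1$. Point (iii) is the one to be careful about, and it is exactly what \cite{Sch11} and \cite{V18} provide: their realization machinery works for any f-category satisfying F1--F7, and Theorem~\ref{main-thm} supplies F7.
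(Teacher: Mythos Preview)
Your proof is correct and follows essentially the same route as the paper: both invoke the strong stable derivator coming from the combinatorial stable model category of symmetric spectra, apply Theorem~\ref{main-thm} to obtain an f-enhancement, use the Postnikov t-structure with heart $\Ab$, and then read off the three cases from the realization functor of \cite{PV17}. The only cosmetic difference is that for $m<n$ you appeal directly to the t-structure orthogonality, whereas the paper subsumes this case into the realization isomorphism $\D^b(\Z)(A,\Sigma^jB)\cong\HoSp((A,0),(B,j))$ for $j\leq 1$ together with the vanishing of negative Ext groups in $\D^b(\Z)$.
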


\begin{proof}  Since the model category $\Sp$ is combinatorial, the associated derivator 
 \[\DER_\Sp:\Cat\opp\to\CAT,\ \DER_\Sp(I)=\mathrm{Ho}(\Sp^I),\]
is stable, by \cite[Example 4.2(1)]{Gr13}. 
Clearly $\HoSp=\DER_\Sp(\0)$, therefore it has an f-enhancement, by Theorem \ref{main-thm} above. Let $\pi_i(X)$ be the i-th homotopy group 
of an object $X\in\HoSp$ and, for shifting to our cohomological grading, put $\pi^{i}(X)=\pi_{-i}(X)$. Then define
\[\HoSp^{\leq0}=\{X\in\HoSp\mid \pi^i(X)=0\hbox{ for all }i>0\}\] and 
\[\HoSp^{\geq0}=\{X\in\HoSp\mid \pi^i(X)=0\hbox{ for all }i<0\}.\]  Then $(\HoSp^{\leq0},\HoSp^{\geq0})$ 
is a t-structure in $\HoSp$, called the {\em Postnikov t-structure} whose heart is the category of Eilenberg--Mac Lane spectra which is equivalent to 
the category $\Ab$ of abelian groups (see \cite[Examples 2.3 and 2.16]{FI07}). 
Note that this already implies $\HoSp((A,n),(B,n))=\Hom_\Z(A,B)$.
By \cite[Theorem 3.11]{PV17} there is a realization functor 
$\real:\D^b(\Z)\to\HoSp$ which coincide with the inclusion on the heart, and induces isomorphisms \[\D^b(\Z)(A,\Sigma^nB)\to\HoSp(A,\Sigma^nB)\]
for all $A,B\in\Ab$, and all $n\leq 1$. Now the conclusion follows from the description of morphism sets the derived category.
\end{proof}

\begin{rem}(a). One can note the following properties of the functor \[\real:\D^b(\Z)\to\HoSp\] defined in the proof of  Corollary \ref{spectra} above (for unexplained terms see \cite{PV17}):
\begin{enumerate}[(1)]
\item It is a triangulated functor, because the f-enhancement constructed in Theorem \ref{main-thm} satisfies F7, so \cite[Theorem A.3]{PV17} applies.  
 \item It is not fully faithful, because 
$\HoSp((A,n),(B,m))$ is not necessary zero for $m>n+1$.
\item It is t-exact with respect to standard t-structure in $\D^b(\Z)$ and the Postnikov t-structure in $\HoSp$. 
\item $\Ho^i=\Ho^i_\HoSp\cdot\real$ for all $i\in\Z$, where $\Ho^i$ and $\Ho_\HoSp^i$ are the cohomology functors associated with the standard t-structure in
$\D^b(\Z)$, respectively the Postnikov t-structure in $\HoSp$. 
\item The functor $\real$ makes the following diagram commutative: 
\[\diagram \C^b(\Z)\rrto^{\sim}\dto && \mathcal{H}\dto \\
	  \D^b(\Z)\rrto^{\real} && \HoSp
\enddiagram\]
where $\C^b(\Z)\to\D^b(\Z)$ is the canonical quotient functor from the bounded category of complexes of abelian groups $\C^b(\Z)$ to the bounded derived category, 
and the left part of the diagram is constructed as follows: 
The Postnikov t-structure in $\HoSp$ induces a t-structure in the f-enhancement $\widetilde\HoSp$ of $\HoSp$, whose heart is denoted by $\mathcal H$ and it is equivalent to 
$\C^b(\Z)$. The functor $\mathcal{H}\to\HoSp$ is the restriction of a canonically constructed functor $\omega:\widetilde\HoSp\to\HoSp$ (for details, see \cite[Section 3]{PV17}).

\end{enumerate}
\end{rem}

\end{document}